\newtheorem{theorem}{Theorem}
\newtheorem{lemma}{Lemma}
\theoremstyle{definition}
\newtheorem{definition}{Definition}
\newtheorem{example}{Example}
\author[UoW]{Jennifer Seberry \fnref{fn1}}
\author[Nick]{N.A. Balonin \fnref{fn2}}
\address[UoW]{University of Wollongong, NSW, Australia}
\address[Nick]{Saint Petersburg State University of Aerospace Instrumentation, St. Petersburg, Russian Federation}
\title{Equivalence of the Existence of Hadamard Matrices and Cretan$(4t-1,2)$-Mersenne Matrices}
\begin{document}

\begin{abstract}
We study orthogonal matrices whose elements have moduli $\leq 1$. This paper shows that the existence of two such families of matrices is equivalent.
Specifically we show that the existence of an Hadamard matrix of order $4t$ is equivalent to the existence of a 2-level Cretan-Mersenne matrix of order $4t-1$.
\end{abstract}

\maketitle

\textbf{Keywords}: \textit{Hadamard matrices; orthogonal matrices; symmetric balanced incomplete block designs (SBIBD); Cretan matrices; Mersenne matrices; 05B20.}

\section{Introduction}\label{sec:introduction}

\indent An application in image processing led to the search for orthogonal matrices, all of whose elements have modulus $\leq 1$ and which have  maximal or high determinant.
  
$Cretan$ matrices were first discussed during a conference in Crete in 2014 by N. A. Balonin, M. B. Sergeev and colleagues of the Saint Petersburg State University of Aerospace Instrumentation but were well known using \textit{Heritage names}, \cite{BMS02,BM06,BMS01,BMS03}. This paper follows closely the joint work of N. A. Balonin, Jennifer Seberry and M. B. Sergeev \cite{BNSJ14,BNSJ14a,BNSJSM14}.

In this and future papers we use some \textit{names, definitions, notations}
differently to how  they have been have in the past \cite{BM06}. This we hope, will cause less confusion, bring our nomenclature closer to common usage, conform for mathematical purists and clarify the similarities and differences between some matrices. We have chosen to use the word level, instead of value for the entries of a Cretan matrix, to conform to earlier writings \cite{BM06, BMS01, BMS03}.

We know of no references where the \textbf{Co-Existence of Hadamard Matrices and Cretan$(4t-1,2)$-Mersenne Matrices Theorem} is stated.

\subsection{Preliminary Definitions}

The absolute value of the determinant of any matrix is not altered by 1) interchanging any two rows, 2) interchanging any two columns, and/or 3) multiplying any row/or column by $-1$. These equivalence operations are called \textit{Hadamard equivalence operations}. So the absolute value of the determinant of any matrix is not altered by the use of Hadamard equivalence operation.

Although it is not the definition used by purists we use orthogonal matrix as below. Write $I_n$ for the identity matrix of order $n$ and let $\omega $ be a constant.
When a matrix $S$ is written in the following form
\newcommand{\BigFig}[1]{\parbox{16pt}{\Huge #1}}
\newcommand{\BigB}{\BigFig{B}}
\[S = \begin{bmatrix}
x      & \sigma   & \dots & \sigma  \\
\sigma  \\
\vdots & & \BigB\\
\sigma 
\end{bmatrix}\] 
$B$ is said to be the \textit{core of } $S$ and the $\sigma $'s are the \textit{borders} of $B$ in $S$. The variable $x$ is set as zero for conference matrices, $\sigma$ for Hadamard matrices and $a$ for Fermat matrices.

\begin{definition} [\textbf{Orthogonal Matrix, Hadamard Matrices, Cretan Matrix}] \label{had-orthog-cretan} 
An \textit{orthogonal matrix}, $S= (s_{ij})$ of order $n$, is square and has the modulus of all its entries $\leq 1$, and satisfies $SS^{\top} = \omega I_n$ for $n = 1, 2, 4t$.

An \textit{Hadamard matrix} of order has entries $\pm 1$  and satisfies
$HH^{\top} = nI_n$ for $n$ = 1, 2, $4t$, $t > 0$ an integer. Any Hadamard matrix can be put into \textit{normalized form}, that is having the first row and column all plus 1s using Hadamard equivalence operations: that is it can be written with a core.

A \textit{Cretan matrix}, $S$,  of order $v$ has entries with modulus $\leq 1$ and at least one element in  each row and column  must be 1, and which satisfies $SS^{\top} = \omega I_v$. A $Cretan(n; \tau; \omega )$ matrix, or $CM(n; \tau ; \omega)$ has $\tau $ levels or values for its entries.
\end{definition}

In this work we will only use orthogonal to refer to matrices comprising real elements with modulus $\leq 1 $, where at least one entry in each row and column must be one. 

Hadamard matrices and weighing matrices are the best known of these orthogonal matrices. We refer to \cite{BNSJ14a,SY92,JH1893,JSW72} for more definitions. We recall Barba \cite{Barba33} showed that for matrices whose entries have modulus $\leq 1$, $B$, of order $n$
\begin{equation}\label{eq:B}
\det{B} \leq \sqrt{2n-1}(n-1)^{\frac{n-1}{2}} \textnormal{ or asymptotically } \approx 0.858(n)^{\frac{n}{2}}\,.
\end{equation}

Wotjas \cite{Wojtas64} showed that for matrices whose entries have modulus $\leq 1$, $B$, of order $n \equiv 2 \pmod{4}$ we have

\begin{equation*}\label{eq:C}  
\det{B} \leq 2(n-1)(n-2)^{\frac{n-2}{2}} \textnormal{ or asymptotically } \approx 0.736(n)^{\frac{n}{2}}\,.
\end{equation*}
More details of Cretan matrices can be found in \cite{NJ15a}. 

When Hadamard introduced his famous inequality \cite{JH1893}, for matrices with moduli $\leq 1$ it was noticed that Hadamard  matrices, which are orthogonal and with entries $\pm 1$ satisfied the equality of Hadamard's inequality.

However the inequality applies to other matrices and orders with entries on the unit disk, these have been named ``Fermat", Mersenne and Euler matrices for different congruence classes $\pmod{4}$, \cite{BNSJ14a,NJ15a}.

We emphasize: in general a Cretan($n,\tau $) or $CM(n,\tau)$ or $S$, has $\tau$ levels, they are made from \textit{$\tau $-variable orthogonal matrices} by replacing the variables by appropriate real numbers with moduli $\leq 1$, where at least one entry in each row and column is 1. After this stage, $\tau $-variable orthogonal matrices and Cretan($n,\tau $), are used, \textit{loosely} to denote one-the-other.

For 2-level Cretan matrices we will denote the levels/values by $x ,y$   where $0 \leq |y| \leq x = 1$. We also use the notations \textit{Cretan(v)}, \textit{Cretan(v)-SBIBD} and \textit{Cretan-SBIBD} for Cretan matrices of 2-levels and order $v$ constructed using $SBIBD$s.

\begin{definition}[\textbf{SBIBD and Incidence Matrix}]\label{def:incidence-matrix-SBIBD} 
For the purposes of this paper we will consider an $SBIBD(v, k, \lambda)$, $B$, to be a $v \times v$ matrix, with entries $0$ and $1$, $k$ ones per row and column, and the inner product of distinct pairs of rows and/or columns to be $\lambda$. This is called the \textit{incidence matrix} of the SBIBD. For these matrices $\lambda(v-1) = k(k-1)$.
\end{definition}

For every $SBIBD(v, k, \lambda)$ there is a complementary $SBIBD(v, v-k, v-2k + \lambda)$. One can be made from the other by interchanging the $0$'s of one with the $1$'s of the other. The usual use $SBIBD$ convention that $v >2k$ and $k >2\lambda$ is followed.

A combinatorial trick allows us to say that any matrix of order $n$ satisfying $AA^{\top} =aI +bJ$ has determinant $\left(\sqrt{a + nb}\right)a^{\frac{n-1}{2}}$.

Writing $A$ for the incidence matrix of the $SBIBD(v, k, \lambda)$, which has entries 0 and 1, and $B=2A -J$, $J$ the matrix of all ones, for the $\pm 1$ it forms we have:

\begin{multline}\label{eq:A}
AA^{\top} = (k-\lambda)I + \lambda J \hspace{0.5cm} AJ=kJ \textnormal{~and~} \\BB^{\top} =2(k - \lambda )I + (v- 2(k-\lambda ))J \hspace{0.5cm} BJ=(2k-v)J.
\end{multline}
Thus 
 \[det(A) = k(k-\lambda)^{\frac{v-1}{2}} \quad \textnormal{and} \quad  det(B) = \sqrt{k^2 + (v-k)^2)}(2(v-k)g^{\frac{v-1}{2}}\,.\]
 
We now define our important concepts the \textit{orthogonality equation}, the \textit{radius equation(s)}, the \textit{characteristic equation(s)} and the \textit{weight} of our matrices.

\begin{definition}[\textbf{Orthogonality equation, radius equation(s), characteristic equation(s), weight}]\label{def:or-rad-char}
Consider the matrix $S= (s_{ij})$ comprising  the variables $x$ and $y$.

The \textit{matrix orthogonality equation} 
  \[ S^{\top }S = SS^{\top} = \omega I_n \]
yields two types of equations: the $n$ equations which arise from taking the inner product of each row/column with itself (which leads to the diagonal elements of $\omega I_n$ being $\omega$) are called \textit{radius equation(s)}, $g(x,y)=\omega$, and the $n^2 -n$ equations, $f(x,y)=0$, which arise from taking inner products of distinct rows of $S$ (which leads to the zero off diagonal elements of $\omega I_n$)  are called \textit{characteristic equation(s)}. The \textit{orthogonality equation} is $\sum_{j=1}^n s_{ij}^2 =  \omega$. $\omega$ is called the \textit{weight} of $S$. \qed
\end{definition}

\begin{example}\label{eg:4a-b}
We consider the 2-variable $S$ matrix given by 
\[  S = \begin{bmatrix}
                x &  y &  y &  y & y \\
                y &  x &  y &  y & y \\
                y &  y &  x &  y & y \\
                y &  y &  y &  x & y \\
                y &  y &  y &  y & x 
        \end{bmatrix}\,. 
\]

By definition, in order to become an orthogonal matrix, it must satisfy the orthogonality equation, $SS^{\top}= \omega I$, the radius and the characteristic equations, so we have 
\[ x^2 + 4y^2 = \omega, \qquad 2xy +3 y^2 = 0\,.\]

To make a Cretan(5;2;$\frac{10}{3}$) we force $x =1$, (since we require that at least one entry per row/column is 1), and the characteristic equation gives $y=-\frac{2}{3}$. Hence $\omega = 3{\frac{1}{3}}$. The determinant is $(\frac{10}{3})^{\frac{5}{2}}$ = 20.286. Thus we now have an $S = Cretan(5;2;{\frac{10}{3}};20.286).$ \qed
\end{example}

\subsection{Notation Transitions}\label{notation-transition}

In transiting from one mother tongue to another (Russian to English and English to Russian) and from previous to newer usage, some words reoccur: we need a shorthand. To simplify references we note:

\begin{table}[h]
\begin{center}
\begin{tabular}{ll|l}
\textbf{Heritage Usage}& Cretan Matrix & References \\
\hline
Fermat         & Cretan(4t+1)    & \cite{BM06,BNSJ14a,NJ15,BMS01} \\
Hadamard       & Cretan(4t)      & \cite{BNSJ14,BMS03,BNSJ14a,JH1893,SY92}\\
Mersenne       & Cretan(4t-1)    & \cite{BMS02,BM06,BMS03,BNSJ14a,Sorrento,Singapore}\\ 
Euler          & Cretan(4t-2)    & \cite{BMS02,BMS01,BMS03}.
\end{tabular}
\caption{Cretan and Heritage Names}
\label{table:cretan-and-heritage-names}
\end{center}
\end{table}

\begin{table}[h]
\begin{center}
\begin{tabular}{ll|l}
\textbf{Usage}& Dual Usage & Heritage name  \\
\hline
CM(4t+1)       & Core of $CM(4t+2)$    & ``Fermat" matrices\\
CM(4t)         & Core of $CM(4t+1)$    & Hadamard matrix \\
CM(4t-1)       & Core of $CM(4t)$      & Mersenne matrix \\
CM(4t-2)       & Core of $CM(4t-1)$    & Euler matrix    \\
CM(4t-3)       & Core of $CM(4t-2)$    & ``Fermat" matrices\\
\hline
\end{tabular}
\caption{Cretan and Core Names}
\label{table:cretan-and-core-names}
\end{center}
\end{table}

\section{Preliminary Results and Hadamard Mathematical Foundations}\label{sec:Had}

As an historical note we point out that J. A. Todd's \cite{Todd33} article showing the relationship of $SBIBD(4t-1,2t-1,t-1)$ and Hadamard matrices of order $4t$ appeared in the same issue of the \textit{Journal of Mathematics and Physics} as the famous paper by R. E. A. C. Paley \cite{Paley33} using Legendre symbols to construct orthogonal matrices.

\begin{example} \label{example:b}
An $SBIBD(7,4,2) = B$ and its complementary $SBIBD(7,3,1)$ can be written with incidence matrices: they are still complementary if permutations of rows/columns are applied to one and other permutations of rows/columns to the other. This is because if $P$ and $Q$ are permutation matrices $PBQ$ is equivalent to the $SBIBD(7,4,2)$.

\begin{multline*}
SBIBD(7,4,2) = \begin{bmatrix}
                   1 & 1 & 1 & 0 & 1 & 0 & 0\\
                   0 & 1 & 1 & 1 & 0 & 1 & 0 \\
                   0 & 0 & 1 & 1 & 1 & 0 & 1 \\
                   1 & 0 & 0 & 1 & 1 & 1 & 0 \\
                   0 & 1 & 0 & 0 & 1 & 1 & 1 \\
                   1 & 0 & 1 & 0 & 0 & 1 & 1 \\
                   1 & 1 & 0 & 1 & 0 & 0 & 1
\end{bmatrix}\\
SBIBD(7,3,1) = \begin{bmatrix}
                   0 & 1 & 1 & 0 & 1 & 0 & 0\\
                   0 & 0 & 1 & 1 & 0 & 1 & 0 \\
                   0 & 0 & 0 & 1 & 1 & 0 & 1 \\
                   1 & 0 & 0 & 0 & 1 & 1 & 0 \\
                   0 & 1 & 0 & 0 & 0 & 1 & 1 \\
                   1 & 0 & 1 & 0 & 0 & 0 & 1 \\
                   1 & 1 & 0 & 1 & 0 & 0 & 0 
\end{bmatrix}
\end{multline*}

The second incidence matrix is still a complement of the incidence matrix of the first $SBIBD$ even after permutations of its rows and/or columns have been performed.
          
To prepare to make Cretan matrices from $SBIBD(4t-1,2t-1,t-1)$ we first put the two $SBIBD$ in variable forms: or $SBIBD(7,4,2)$ \text{circ}($x,x,x,y,x,y,y$) and for the complementary $SBIBD(7,3,1)$, circ($y,x,x,y,x,y,y$), for 
\allowdisplaybreaks
\begin{multline*}
SBIBD(7,4,2) = \begin{bmatrix}
                   x & x & x & y & x & y & y\\
                   y & x & x & x & y & x & y \\
                   y & y & x & x & x & y & x \\
                   x & y & y & x & x & x & y \\
                   y & x & y & y & x & x & x \\
                   x & y & x & y & y & x & x \\
                   x & x & y & x & y & y & x
\end{bmatrix}\\
SBIBD(7,3,1) = \begin{bmatrix}
                   y & x & x & y & x & y & y\\
                   y & y & x & x & y & x & y \\
                   y & y & y & x & x & y & x \\
                   x & y & y & y & x & x & y \\
                   y & x & y & y & y & x & x \\
                   x & y & x & y & y & y & x \\
                   x & x & y & x & y & y & y 
\end{bmatrix}
\end{multline*} 

We could just have exchanged $x$ and $y$ in the definition \text{circ}($x,x,x,y,x,y,y$) to get the second $SBIBD$, but we wanted to illustrate that there are many other possibilities for the first row of the second $SBIBD$.

\begin{figure}[h] 
  \centering
  \subfloat[][the principal solution]{\includegraphics[width=0.4\textwidth]{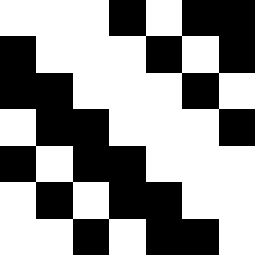}} \qquad \qquad
  \subfloat[][the complementary  solution]{\includegraphics[width=0.4\textwidth]{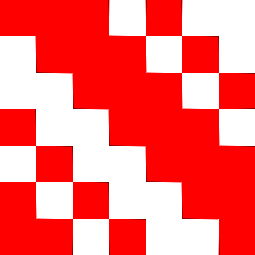}}\\
  \caption{Orthogonal matrices for order $7$: Balonin-Sergeev Family}
  \label{fig:S7}
\end{figure}  

Consider the 2-variable $SBIBD(7,4,2)$: it has characteristic equation $2x^{2} + 4xy+ y^{2} = 0$, and radius equation $\omega = 4x^2 + 3y^2$. $\det(S) = \omega^{\frac{7}{2}}$. This is an $CM(7,2)$. To make a Cretan matrix we now set $x=1$ and solve the characteristic equation to find $y$ in terms of $x$. This value/level is then used to give the Cretan$(7;2;5.0294)$. Thus the principal solution has 
\[x = 1,\hspace{1cm}  y = - 2 + \sqrt{2},\; \hspace{1cm}  \omega = 4x^{2} + 3y^{2} =  5.0294\;.\]

The $SBIBD(7,3,1)$ with characteristic equation $x^{2} + 4xy+ 2y^{2} = 0$, and radius equation $\omega = 3x^2 + 4y^2$, $\det(S) = \omega^{\frac{7}{2}}$,  (smaller than above values replacing $y$), has a feasible solution for the characteristic equation. Proceeding as before we obtain a second solution

\[x = 1,\hspace{1cm} y = \frac{-2+\sqrt{2}}{2}\;, \hspace{1cm} \omega = 3x^{2} + 4y^{2} = 3.3431\;.\]
Thus it gives a Cretan$(7;2;3.3431)$ matrix.

The two determinants are $285.31$ and $69.319$ respectively. 

Loosely we write $Cretan(7;2:5.2904)-SBIBD$, (2-variable orthogonal $SBIBD$ $(7,4,2)$), and obtain the following 2-level Cretan-Mersenne matrix:

\[ \begin{bmatrix*}[c]
                   1 & 1 & 1 &-2+\sqrt{2} & 1 &-2+\sqrt{2} &-2+\sqrt{2}\\
                  -2+\sqrt{2} & 1 & 1 & 1 &-2+\sqrt{2} & 1 &-2+\sqrt{2} \\
                  -2+\sqrt{2} &-2+\sqrt{2} & 1 & 1 & 1 &-2+\sqrt{2} & 1 \\
                   1 &-2+\sqrt{2} &-2+\sqrt{2} & 1 & 1 & 1 &-2+\sqrt{2} \\
                  -2+\sqrt{2} & 1 &-2+\sqrt{2} &-2+\sqrt{2} & 1 & 1 & 1 \\
                   1 &-2+\sqrt{2} & 1 &-2+\sqrt{2} &-2+\sqrt{2} & 1 & 1 \\
                   1 & 1 &-2+\sqrt{2} & 1 &-2+\sqrt{2} &-2+\sqrt{2} & 1 
\end{bmatrix*} 
\]\qed
\end{example}

\section{Cretan-SBIBD(v,2) Theorem: The Cretan 2-level Matrices from SBIBD Theorem: }\label{sec:main SBIBD-CRETAN}

Although our main theorem appeared as a corollary in \cite{NJ15a} it is in fact worthy of being a theorem in its own right. The paper \cite{NJ15a} gives many relevant details.  We write

\begin{theorem}[\textbf{The Cretan-SBIBD(v,2) Theorem: The Cretan 2-level Matrices from SBIBD Theorem}] \label{principal-2-var-theorem}
Whenever there exists an $SBIBD(v,k,\lambda)$ there exists a  2-level Cretan-SBIBD$(v,2)$, or $S$ or $CM$, as follows,
\begin{itemize}
\item  Cretan$\left(v;2;kx^2 + (v-k)y^2; \textnormal{ y,x;determinant} \right);$ 
\item  Cretan$\left(v;2;(v-k)x^2 +ky^2; \textnormal{ y,x;determinant}\right)$.
\end{itemize}
We have used the notation Cretan(order;$\tau ;\omega;$ \textnormal{ y,x;determinant}).
 \qed
\end{theorem}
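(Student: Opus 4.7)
The plan is to build the 2-variable matrix $S$ directly from the incidence matrix $A$ of the $SBIBD(v,k,\lambda)$ by substituting the variable $x$ for every $1$ and the variable $y$ for every $0$. Each row and column of $S$ then carries exactly $k$ copies of $x$ and $v-k$ copies of $y$, so after we normalise by setting $x=1$ the Cretan requirement that every row and column contains at least one entry equal to $1$ is automatically satisfied.

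First I would compute $SS^{\top}$ using the $SBIBD$ identities in~(\ref{eq:A}). The diagonal entries immediately produce the radius equation $\omega=kx^{2}+(v-k)y^{2}$. Two distinct rows of $A$ agree in $\lambda$ positions of $1$'s, agree in $v-2k+\lambda$ positions of $0$'s, and differ in $2(k-\lambda)$ positions; so the off-diagonal entries are all equal and yield the single characteristic equation
\[ \lambda x^{2}+2(k-\lambda)xy+(v-2k+\lambda)y^{2}=0. \]
Setting $x=1$ reduces this to a quadratic in $y$ whose discriminant collapses, via $\lambda(v-1)=k(k-1)$, to $4(k-\lambda)>0$, so real roots always exist.

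Next I would verify the admissibility condition $|y|\le 1$. Both roots are negative, because their sum $-2(k-\lambda)/(v-2k+\lambda)$ is negative and their product $\lambda/(v-2k+\lambda)$ is positive. The standard $SBIBD$ conventions $v>2k$ and $k>\lambda$ force this product to be strictly less than $1$, so the two roots cannot both satisfy $|y|\ge 1$; hence at least one real root of the characteristic equation lies in $(-1,0)$. Using this value of $y$ together with $x=1$, the radius equation evaluates $\omega$ and $SS^{\top}=\omega I_{v}$ delivers $|\det(S)|=\omega^{v/2}$, producing the first of the two listed Cretan matrices.

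Finally, the second form is obtained by running the identical argument on the complementary $SBIBD(v,v-k,v-2k+\lambda)$, whose existence was recorded in the excerpt; this simply interchanges the roles of $k$ and $v-k$ in the per-row counts and yields the weight $(v-k)x^{2}+ky^{2}$ together with the analogous (again admissible) characteristic equation. The main obstacle is the admissibility check $|y|\le 1$: once one sees that the product of the two roots equals $\lambda/(v-2k+\lambda)$, the inequality $v>2k$ does the work, and the remainder is routine bookkeeping with the identities already stated in~(\ref{eq:A}).
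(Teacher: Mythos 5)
Your proposal is correct and follows the same route the paper itself takes: substitute $x$ for the ones and $y$ for the zeros of the incidence matrix, read off the radius equation $\omega=kx^{2}+(v-k)y^{2}$ and the characteristic equation $\lambda x^{2}+2(k-\lambda)xy+(v-2k+\lambda)y^{2}=0$, set $x=1$ and solve for $y$, then pass to the complementary $SBIBD(v,v-k,v-2k+\lambda)$ for the second bullet --- exactly the procedure carried out in Example~\ref{example:b} and in the special case of Lemma~\ref{lem:SBIBD to Cretan(4t-1)}. The paper states the theorem without proof (deferring to a reference), so your general verification --- the discriminant collapsing to $4(k-\lambda)>0$ via $\lambda(v-1)=k(k-1)$, and the product of the two negative roots being $\lambda/(v-2k+\lambda)<1$ under $v>2k$, which guarantees an admissible root in $(-1,0)$ --- supplies precisely the step the paper leaves implicit.
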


In all these $Cretan-Hadamard$ cases (but not in all cases) the Balonin-Sergeev-Cretan$(4t-1,2)$ matrix with higher determinant comes from $SBIBD(4t-1,2t,t)$ while the $SBIBD(4t-1,2t-1,t-1)$ gives a Cretan$(4t-1,2)$ matrix with smaller determinant. These examples have been given as they may give circulant SBIBD when other matrices do not necessarily do so.

\section{Main Equivalence Theorem}\label{sec:main-equiv-theorem}

\begin{lemma}[\textbf{Hadamard to SBIBD Lemma}] \label{lem:H toSBIBD} 
Whenever there exists an Hadamard matrix of order $4t$ there exists an $SBIBD(4t-1,2t-1,t-1)$.
\end{lemma}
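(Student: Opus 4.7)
The plan is to start from a Hadamard matrix $H$ of order $4t$ and produce the incidence matrix of an $SBIBD(4t-1, 2t-1, t-1)$ by a standard normalize-and-delete-then-shift construction.

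First, using Hadamard equivalence operations (row/column sign flips and permutations, which preserve the Hadamard property), I would bring $H$ into normalized form, so that its first row and first column consist entirely of $+1$'s. Let $C$ be the $(4t-1) \times (4t-1)$ submatrix obtained by deleting this first row and first column. Since each non-initial row of a normalized Hadamard matrix is orthogonal to the all-ones first row, every such row contains exactly $2t$ entries equal to $+1$ and $2t$ entries equal to $-1$; removing the leading $+1$ leaves each row of $C$ with $2t-1$ entries $+1$ and $2t$ entries $-1$. The same count holds for columns, by the analogous argument applied to $H^{\top}$.

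Next, define
\[ A = \tfrac{1}{2}(C + J_{4t-1}), \]
where $J_{4t-1}$ is the all-ones matrix of order $4t-1$. This turns $+1$'s into $1$'s and $-1$'s into $0$'s, so $A$ is a $(0,1)$-matrix. By the row/column count above, $A$ has exactly $k = 2t-1$ ones in each row and each column, which matches the required $k$ parameter of the SBIBD.

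It remains to verify the inner product $\lambda$ of distinct rows of $A$. For two distinct non-initial rows $r_i, r_j$ of $H$ we have $r_i \cdot r_j = 0$; since both begin with $+1$, the corresponding $\pm 1$ rows $c_i, c_j$ of $C$ satisfy $c_i \cdot c_j = -1$. Writing $a_i = \tfrac{1}{2}(c_i + \mathbf{1})$ and expanding,
\[ a_i \cdot a_j = \tfrac{1}{4}\bigl(c_i \cdot c_j + c_i \cdot \mathbf{1} + c_j \cdot \mathbf{1} + \mathbf{1}\cdot\mathbf{1}\bigr) = \tfrac{1}{4}\bigl(-1 + (2t-1-2t) + (2t-1-2t) + (4t-1)\bigr) = t-1, \]
so any two distinct rows of $A$ meet in exactly $\lambda = t-1$ positions. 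Hence $A$ is the incidence matrix of an $SBIBD(4t-1, 2t-1, t-1)$, which is exactly what we wanted.

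The only potentially delicate step is bookkeeping the signs/counts in the inner product computation, but this is routine arithmetic; no combinatorial obstruction appears, and every identity used is a direct consequence of $H$ being a normalized Hadamard matrix.
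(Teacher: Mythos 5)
Your proposal is correct and follows essentially the same route as the paper: normalize $H$, delete the first row and column to obtain the $\pm1$ core, and apply the shift $A=\tfrac{1}{2}(C+J)$ to get the incidence matrix. The only difference is presentational — you verify the row sums and pairwise inner products by direct expansion, where the paper simply asserts $BB^{\top}=4tI-J$ and hence $AA^{\top}=tI+(t-1)J$; your arithmetic checks out.
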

\begin{proof}
Let $H$ be the Hadamard matrix of order $4t$. We use the Hadamard equivalence operations to make the normalized Hadamard matrix $G$ with the first row and column all $+1$'s.

It now has the form
\[G = \begin{bmatrix}
    1 & 1  & \dots & 1 \\
    1 \\
    \vdots & & \BigB\\
    1
\end{bmatrix}\]
where $B$ is a $\pm 1$ matrix of order $4t-1$ containing $2t$ $-1$'s and $2t-1$ 1's per row and column. $B$ satisfies $BB^{\top} = 4tI - J$. We form $A$ by replacing the $-1$ elements of $B$ by zero, that is $A =\frac{1}{2}(J+B)$. Then $A$ satisfies $AA^{\top} = tI + (t-1)J$ and is the incidence matrix of an $SBIBD(4t-1,2t-1,t-1)$ as required.
\end{proof}

\begin{lemma}[\textbf{SBIBD to Cretan$(4t-1,2)$ Lemma}] \label{lem:SBIBD to Cretan(4t-1)}
Whenever there exists an $SBIBD(4t-1,2t-1,t-1)$, there exists an Cretan-Mersenne 2-level matrix.
\end{lemma}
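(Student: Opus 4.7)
The plan is to invoke Theorem~\ref{principal-2-var-theorem} with the parameter triple $(v,k,\lambda)=(4t-1,2t-1,t-1)$; the only substantive task beyond that is to check that the 2-variable orthogonal matrix built from the $SBIBD$ admits a real level $y$ with $|y|\leq 1$ once the other level $x$ is normalized to $1$.

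First, I would take the incidence matrix $A$ of the $SBIBD(4t-1,2t-1,t-1)$, replace its ones by the variable $x$ and its zeros by the variable $y$, obtaining a 2-variable matrix $S$. Each row of $S$ then contains exactly $2t-1$ copies of $x$ and $2t$ copies of $y$, and the inner product of any two distinct rows of $S$ evaluates (exactly as in Example~\ref{example:b}) to
\[ \lambda x^{2} + 2(k-\lambda)xy + (v-2k+\lambda)y^{2} = (t-1)x^{2} + 2txy + ty^{2}, \]
so the characteristic equation is $(t-1)x^{2}+2txy+ty^{2}=0$ and the radius equation is $\omega=(2t-1)x^{2}+2ty^{2}$.

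Next, I would set $x=1$, as required by the definition of a Cretan matrix (at least one entry per row and column must equal $1$), and solve the resulting quadratic $ty^{2}+2ty+(t-1)=0$. The discriminant is $4t^{2}-4t(t-1)=4t\geq 0$, so the roots are $y=-1\pm 1/\sqrt{t}$; the admissible root is $y=-1+1/\sqrt{t}$, which lies in $[-1,0]$ for every $t\geq 1$, so $|y|\leq 1$ and the Cretan requirement is met. Substituting into the radius equation yields the explicit weight
\[ \omega = (2t-1) + 2t\left(1-\frac{2}{\sqrt{t}}+\frac{1}{t}\right) = 4t+1-4\sqrt{t}, \]
so $S$ becomes a Cretan$(4t-1;2;4t+1-4\sqrt{t})$-Mersenne matrix (and one may check that this recovers $\omega\approx 3.3431$ for $t=2$, consistent with Example~\ref{example:b}).

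The main---and rather minor---obstacle is verifying that the characteristic equation admits a real root with $|y|\leq 1$; beyond that the lemma is just a specialization of Theorem~\ref{principal-2-var-theorem} combined with elementary algebra, since the $SBIBD$ is assumed to exist and no further existence argument is required. If one wanted the second matrix promised by Theorem~\ref{principal-2-var-theorem}, one would start instead from the complementary $SBIBD(4t-1,2t,t)$ and repeat the same calculation; but a single 2-level Cretan-Mersenne matrix suffices for the lemma.
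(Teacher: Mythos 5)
Your proposal is correct and follows essentially the same route as the paper: build the 2-variable matrix from the incidence matrix, set $x=1$, and solve the quadratic characteristic equation for an admissible $y$ with $|y|\leq 1$. The only difference is that you assign the level $1$ to the ones of the design while the paper assigns it to the zeros, so you obtain the complementary solution $y=-1+1/\sqrt{t}$, $\omega=4t+1-4\sqrt{t}$ rather than the paper's $y=\frac{-t+\sqrt{t}}{t-1}$ with $\omega=2t+(2t-1)y^{2}$ --- both choices yield a valid 2-level Cretan-Mersenne matrix (and yours even avoids the degenerate denominator at $t=1$).
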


\begin{proof}
We take the $SBIBD$ and replace the zeros by $x$ and the ones by $y$. Call this matrix $C$.

We choose $x=1$ and $y = \frac{-t + \sqrt{t}}{t-1}$.
We first show this satisfies the orthogonality equation. The inner product of any row with itself is $2t + (2t-1)\left(\frac{-t + \sqrt{t}}{t-1}\right)^2=\omega$ which is a constant given $t$. Thus we have the radius equation. Using Hadamard equivalence operations we permute the columns of $C$ until we have
\begin{gather*}
\overbrace{y,y,\dots,y,y,~~y,y,\dots, \;y,\;y}^{2t-1} \qquad \overbrace{x,x,\dots,x,x,~~x,x,\dots,x,x}^{2t}\\
\underbrace{y,y,\dots,y,y}_{t-1} ~~\underbrace{x,x,\dots,x,x}_{t} \qquad \underbrace{y,y,\dots,y,y}_{t} ~~ \underbrace{x,x,\dots,x,x}_{t}
\end{gather*}
 Then the inner product of rows $i$ and $j$ is
\[(t-1)y^2+2txy+tx^2= (t-1)\left (\frac{-t + \sqrt{t}}{t-1}\right )^2 + 2t\left(\frac{-t + \sqrt{t}}{t-1}\right) + t =0\,.\]
Hence $CC^{\top} =\omega I$ and we have the matrix orthogonality equation. Thus we have formed the required 2-level Cretan-Mersenne matrix.
\end{proof}

\begin{lemma} [\textbf{Cretan$(4t-1,2)$ to Hadamard Lemma}] \label{lem:Cretan(4t-1) to Had}  
Whenever there exists an $SBIBD(4t-1,2t-1,t-1)$, there exists an Hadamard matrix of order $4t$ .
\end{lemma}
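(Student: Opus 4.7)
The plan is to reverse the construction used in Lemma~\ref{lem:H toSBIBD}. Starting from the incidence matrix $A$ of the $SBIBD(4t-1,2t-1,t-1)$, I will form the $\pm 1$ matrix $B=2A-J$, verify the two properties $BB^{\top}=4tI-J$ and ``constant row sum $-1$,'' and then border $B$ with a first row and first column of $+1$'s to obtain a normalized Hadamard matrix of order $4t$.

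First, using~\eqref{eq:A} with $v=4t-1$, $k=2t-1$, $\lambda=t-1$, so that $AA^{\top}=tI+(t-1)J$ and $AJ=JA^{\top}=(2t-1)J$, I compute
\[
BB^{\top}=(2A-J)(2A^{\top}-J)=4AA^{\top}-2AJ-2JA^{\top}+(4t-1)J=4tI-J.
\]
Likewise, letting $\mathbf{1}$ denote the all-ones column vector of length $4t-1$, the row sum is
\[
B\mathbf{1}=2A\mathbf{1}-(4t-1)\mathbf{1}=2(2t-1)\mathbf{1}-(4t-1)\mathbf{1}=-\mathbf{1},
\]
and by a symmetric computation $\mathbf{1}^{\top}B=-\mathbf{1}^{\top}$.

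Next, I define
\[
H=\begin{bmatrix} 1 & \mathbf{1}^{\top} \\ \mathbf{1} & B \end{bmatrix},
\]
a $\pm 1$ matrix of order $4t$, and check the three blocks of $HH^{\top}$. The $(1,1)$ entry is $1+\mathbf{1}^{\top}\mathbf{1}=4t$. The first row/column off-diagonal entries equal $\mathbf{1}^{\top}+\mathbf{1}^{\top}B^{\top}=\mathbf{1}^{\top}+(B\mathbf{1})^{\top}=\mathbf{1}^{\top}-\mathbf{1}^{\top}=\mathbf{0}^{\top}$, using the row-sum identity above. The lower-right $(4t-1)\times(4t-1)$ block is $\mathbf{1}\mathbf{1}^{\top}+BB^{\top}=J+(4tI-J)=4tI$. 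Hence $HH^{\top}=4tI_{4t}$, so $H$ is Hadamard of order $4t$.

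There is no real obstacle here — the lemma is essentially the inverse of Lemma~\ref{lem:H toSBIBD}, and once the row-sum identity $B\mathbf{1}=-\mathbf{1}$ is noted, the bordering succeeds automatically. The only subtlety is sign bookkeeping: because $B=2A-J$ (rather than $J-2A$) produces row sum $-1$, one borders with $+1$'s to achieve cancellation; using the complementary convention one would border with $-1$'s, which yields a Hadamard-equivalent matrix. Chaining this lemma with Lemmas~\ref{lem:H toSBIBD} and~\ref{lem:SBIBD to Cretan(4t-1)} closes the cycle Hadamard $\Rightarrow$ SBIBD $\Rightarrow$ Cretan-Mersenne $\Rightarrow$ Hadamard, establishing the equivalence announced in the title.
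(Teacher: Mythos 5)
Your proposal is correct and follows essentially the same route as the paper: form $B=2A-J$ from the incidence matrix, note $BB^{\top}=4tI-J$, and border with a row and column of $+1$'s. You simply supply the row-sum computation $B\mathbf{1}=-\mathbf{1}$ and the block verification of $HH^{\top}=4tI$ that the paper asserts without detail, and you start directly from the $SBIBD$ rather than first stripping the Cretan matrix down to its incidence matrix, which matches the lemma as literally stated.
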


\begin{proof}
We take the Cretan$(4t-1,2)$ matrix and replace the entries which are not $1$ by zero.
This gives the incidence matrix $A$ of the $SBIBD(4t-1,2t-1,t-1)$. By definition of $SBIBD$ the inner product of any pair of distinct columns is $\lambda = t-1$ so it satisfies $AA^{\top} = tI + (t-1)J$, $J$ the matrix of all $1$'s. We note that it satisfies $AA^{\top} = tI + (t-1)J$.  It has a total of $4t-1$ entries, $2t-1$ 1's and $2t$ zeros in each row and column.  We now form $B = 2A - J$, of order $4t-1$ and note $BB^{\top} = 4tI  -J$. Bordering $B$ with a row and column of $1$'s gives the required Hadamard matrix.
\end{proof}

\begin{theorem} [\textbf{Co-Existence of Hadamard Matrices and Cretan$(4t-1,2)$\\-Mersenne Matrices Theorem}] \label{th:equivalence} 
The existence of an Hadamard matrix of order $4t$ is equivalent to the existence of a Cretan$(4t-1,2)$-Mersenne matrix.
\end{theorem}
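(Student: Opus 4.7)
The plan is to close a cycle of implications using the three preceding lemmas, thereby establishing both directions of the stated equivalence. In neither direction is there new combinatorial content beyond what the lemmas already supply; the proof is essentially a bookkeeping argument that threads Lemmas~\ref{lem:H toSBIBD}, \ref{lem:SBIBD to Cretan(4t-1)} and \ref{lem:Cretan(4t-1) to Had} together.

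For the forward direction ($\Rightarrow$), I start with an Hadamard matrix $H$ of order $4t$. Lemma~\ref{lem:H toSBIBD} normalises $H$, strips the border of $1$'s, and produces the incidence matrix of an $SBIBD(4t-1,2t-1,t-1)$. I then feed that $SBIBD$ into Lemma~\ref{lem:SBIBD to Cretan(4t-1)}, which substitutes $x=1$ for the zeros and $y = (-t+\sqrt{t})/(t-1)$ for the ones and verifies, via the radius and characteristic equations, that the resulting array is a 2-level Cretan-Mersenne matrix.

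For the reverse direction ($\Leftarrow$), I invoke Lemma~\ref{lem:Cretan(4t-1) to Had}: starting from a Cretan$(4t-1,2)$-Mersenne matrix $C$ with levels $1$ and $y$, the $\{0,1\}$-matrix $A$ obtained by replacing every non-$1$ entry of $C$ by $0$ is the incidence matrix of an $SBIBD(4t-1,2t-1,t-1)$. Forming $B=2A-J$ and bordering $B$ with a row and column of $1$'s then yields the required Hadamard matrix of order $4t$, since $BB^{\top} = 4tI - J$ is exactly the relation that makes the bordered matrix Hadamard.

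The main obstacle, such as it is, lies in the reverse direction and is the step that Lemma~\ref{lem:Cretan(4t-1) to Had} quietly performs: one must confirm that the level-to-$0$ substitution always produces an $SBIBD$ with the correct $(v,k,\lambda)$ parameters. This is forced by the orthogonality identity $CC^{\top}=\omega I_{4t-1}$ together with the assumption that $C$ has only the two levels $1$ and $y$. The diagonal (radius) equations pin down the common count of $1$'s per row and column, while the $(4t-1)^2 - (4t-1)$ off-diagonal (characteristic) equations force the pairwise row intersection of $A$ to be the constant $\lambda = t-1$. Once these counts are established, the bordering construction is mechanical and the equivalence follows immediately.
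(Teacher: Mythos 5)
Your proof is correct and follows essentially the same route as the paper: the forward direction chains Lemma~\ref{lem:H toSBIBD} with Lemma~\ref{lem:SBIBD to Cretan(4t-1)}, and the converse is exactly the bordering construction of Lemma~\ref{lem:Cretan(4t-1) to Had}. The one place you go beyond the paper --- asserting that the orthogonality relations force $\lambda = t-1$ --- slightly overclaims, since the radius and characteristic equations only force $k$ and $\lambda$ to be \emph{constant} (the ones of a 2-level Cretan$(4t-1)$ matrix may equally form the complementary $SBIBD(4t-1,2t,t)$, as in the paper's own Example~\ref{example:b}), but complementing recovers the stated parameters, so the equivalence is unaffected.
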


\begin{proof}
We have shown that and Hadamard matrix of order $4t$ can be used to form an $SBIBD(4t-1,2t-1,t-1)$ using Lemma \ref{lem:H toSBIBD} as its core.

Lemma \ref{lem:SBIBD to Cretan(4t-1)} was used to show that whenever there exists an $SBIBD(4t-1,2t-1,t-1)$, there exists an Cretan-Mersenne 2-level matrix.

This shows that an Hadamard matrix can always be used to obtain a Cretan-Mersenne 2-level matrix.

To prove the converse we use the Cretan$(4t-1,2)$ to Hadamard Lemma \ref{lem:Cretan(4t-1) to Had} to show how a 2-level Cretan$(4t-1,2)$ matrix can always be used to form an Hadamard matrix.
Whenever there exists an $SBIBD(4t-1,2t-1,t-1)$, there exists an Hadamard matrix of order $4t$ .
This shows that a Cretan-Mersenne 2-level matrix can always be used to obtain a Cretan-Mersenne 2-level matrix.

Thus we have shown the existence of one gives the existence of the other. This completes the proof.
\end{proof}

{\bf Conjecture:} Since Hadamard matrices are conjectured to exist for all orders $4t$, $t>0$ an integer, 2-level Cretan$(4t-1,2)$-Mersenne matrices are conjectured to exist for all orders $4t-1$, $t>0$ an integer.

\section{Conclusions}

Cretan matrices are a very new area of study. They have many research lines open: what is the minimum number of variables that can be used; what are the determinants that can be found for Cretan($n;\tau$) matrices; why do the congruence classes of the orders make such a difference to the proliferation of Cretan matrices for a given order; find the Cretan matrix with maximum and minimum determinant for a given order; can one be found with fewer levels? 

We conjecture that $\omega \approxeq v $ will give unusual conditions.\qed

\section{Acknowledgements}

 The authors also wish to sincerely thank Mr Max Norden, BBMgt(C.S.U.), for his work preparing the content and LaTeX version of this article.

We acknowledge use of \url{http://www.mathscinet.ru} and \url{ http://www.wolframalpha.com} sites for the number and symbol calculations in this paper.

\bibliographystyle{elsarticle-num}
\bibliography{Equiv_Had_Cretan_Mersenne_refs}

\end{document}